\newtheorem{thm}{Theorem}[section]
\newtheorem{obs}[thm]{Observation}
\newtheorem{prop}[thm]{Proposition}
\newtheorem{lem}[thm]{Lemma}
\newtheorem{defi}[thm]{Definition}
\providecommand{\RR}{\mathbb{R}}
\title{Sensitivity and Hamming graphs}
 \author[S. Asensio]{Sara Asensio}
 \address{Instituto de Investigaci\'on en Matem\'aticas (IMUVa), Universidad de Valladolid, Valladolid, Spain}
 \email{sara.asensio@uva.es}
 \author[Y. Filmus]{Yuval Filmus}
 \address{The Henry and Marilyn Taub Faculty of Computer Science and Faculty of Mathematics, Technion Israel Institute of Technology, Haifa, Israel.}
 \email{yuvalfi@technion.ac.il}
 \author[I. García-Marco]{Ignacio García-Marco}
 \address{Instituto de Matem\'aticas y Aplicaciones (IMAULL), Secci\'on de Matem\'aticas, Facultad de
Ciencias, Universidad de La Laguna, 38200, La Laguna, Spain}
 \email{iggarcia@ull.edu.es}
\author[K. Knauer]{Kolja Knauer}
\address{Departament de Matemàtiques i Informàtica, Universitat de Barcelona, Barcelona, Spain and Centre de Recerca Matemàtica, Barcelona, Spain}
 \email{kolja.knauer@ub.edu}
\begin{document}

\begin{abstract}
For any $m\geq 3$ we show that the Hamming graph $H(n,m)$ admits an imbalanced partition into $m$ sets, each inducing a subgraph of low maximum degree. This improves previous results by Tandya and by Potechin and Tsang, and disproves the Strong $m$-ary Sensitivity Conjecture of Asensio, García-Marco, and Knauer. On the other hand, we prove their weaker $m$-ary Sensitivity Conjecture by showing that the sensitivity of any $m$-ary function is bounded from below by a polynomial expression in its degree. 
\end{abstract}

\maketitle

\section{Introduction}
In 2019, Huang \cite{H19} provided a one-page proof of the fact that every induced subgraph on more than half of the vertices of the $n$-dimensional hypercube $Q^n$ has maximum degree at least $\sqrt{n}$. Thanks to an equivalence previously obtained by Gotsman and Linial \cite{GL92}, this solved one of the main open problems at that moment in complexity theory: the Sensitivity Conjecture of Nisan and Szegedy \cite{NS94}: $s(f) \geq \sqrt{\deg(f)}$ for any Boolean function $f\colon\{0,1\}^n\to \{0,1\}$ of sensitivity $s(f)$ and degree $\deg(f)$. 

In the same work, Huang proposed to study for a given graph $G$ with nice symmetries, the minimum value of the maximum degree of an induced subgraph on more than $\alpha(G)$ vertices, where $\alpha(G)$ stands for the independence number of $G$. This graph parameter was called the sensitivity of $G$ and denoted by $\sigma(G)$ in \cite{GMK22}. With this notation, Huang's result can be restated as $\sigma(Q^n) \geq \sqrt{n}$. Previously, in 1988, Chung et al.~\cite{CFGS88} constructed subgraphs of $Q^n$ on more than half of the vertices and maximum degree $\lceil \sqrt{n} \rceil$. This construction together with Huang's result prove that the sensitivity of $Q^n$ is $\sigma(Q^n) = \lceil \sqrt{n} \rceil$.

Extending Huang's result to other families of graphs has been an active area of research. His result has been generalized to Cartesian powers of cycles (Tikaradze,~\cite{Tik22}), paths (Zeng and Hou,~\cite{ZH24}), and other Cartesian and semistrong products of graphs (Hong, Lai and Liu,~\cite{HLL20}). Alon and Zheng showed that Huang's result implies a similar result for Cayley graphs over $\mathbb{Z}_2^{\,n}$~\cite{AZ20}, which was later generalized to arbitrary abelian Cayley graphs by Potechin and Tsang~\cite{PT20}, and to Cayley graphs of Coxeter groups and expander graphs by García-Marco and Knauer~\cite{GMK22}. Similar results on Kneser graphs have been developed by Frankl and Kupavskii~\cite{FK20}, and by Chau, Ellis, Friedgut and Lifshitz~\cite{CEFL25}.  On the negative side, infinite families of Cayley graphs with low-degree induced subgraphs on many (more than the independence number) vertices were constructed by Lehner and Verret~\cite{LV20}, and by García-Marco and Knauer~\cite{GMK22}. 

For $m, n \geq 1$,  the Hamming graph $H(n,m)$ is the graph with vertex set $\{0,\ldots,m-1\}^n$ and two vertices are adjacent if and only if they differ in exactly one entry (their Hamming distance is $1$). The Hamming graph $H(1,m)$ is isomorphic to the complete graph $K_m$, and $H(n,2)$ is isomorphic to $Q^n$, the $n$-dimensional hypercube graph. The sensitivity of $H(n,3)$ has been first studied in~\cite{GMK22} and later by Potechin and Tsang~\cite{PT24}. For general $m \geq 3$, Tandya~\cite{T22} exhibits an induced subgraph of $H(n,m)$ with more than $\alpha(H(n,m))$ vertices and maximum degree equal to $1$. Thus, one gets \[ \sigma(H(n,m)) = 
\begin{cases}
    \lceil \sqrt{n} \rceil & \text{if } m = 2, \\
    1 & \text{if } m \ge 3.
\end{cases}
\]

In the case of the hypercube, the results of~\cite{CFGS88} yield a partition $\{V_1,V_2\}$ of the vertices of $Q^n$ such that both sets induce a subgraph of maximum degree at most $\lceil \sqrt{n} \rceil$ and both differ by $1$ from half the vertices. The present paper studies a generalization to Hamming graphs. Namely,  
let $\Pi = \{V_1,\ldots,V_m\}$ be a partition of the vertices of $H(n,m)$ into $m$ sets. The \emph{maximum degree} $\Delta(\Pi)$ of $\Pi$ is the maximum value among the maximum degrees of the induced subgraphs of $H(n,m)$ on $V_1,\ldots,V_m$. The \emph{imbalance} $\iota(\Pi)$ of $\Pi$ is $\sum_{i=1}^m||V_i|-m^{n-1}|$. 
Clearly, if $\Pi$ is \emph{imbalanced}, i.e., $\iota(\Pi)>0$, then $\Delta(\Pi) \geq \sigma(H(n,m))$. Already for $d=1$ our first result strengthens the work of Tandya~\cite{T22} and disproves the Strong $m$-ary Sensitivity Conjecture of~\cite{AGMK24}:

\begin{restatable}[\sf Imbalanced partitions]{thm}{imbalancedpartitions} 
\label{imbalancedpartitions}
For all integers $m,d,n\geq 1$ there exists a partition $\Pi$ of $H(n,m)$ into $m$ sets with maximum degree $\Delta(\Pi)\leq d$ and imbalance \[ \iota(\Pi) = \begin{cases} (m-2) m^{\lfloor \frac{n(d-1)}{d} \rfloor } & \text{if }  d < n \text{ and } m \text{ is even},\\
 (m-1) m^{\lfloor \frac{n(d-1)}{d} \rfloor } & \text{if }  d < n \text{ and } m \text{ is odd}, \\ 2 m^{n-1} \bigg\lfloor \frac{m \lfloor \frac{d}{n} \rfloor }{\lfloor \frac{d}{n} \rfloor + 1 } \bigg\rfloor & \text{if } d\geq n\, .\end{cases} \]
\end{restatable}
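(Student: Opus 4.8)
The plan is to obtain every partition by perturbing the canonical proper colouring $\Sigma(x)=\sum_{i=1}^{n}x_i \bmod m$ of $H(n,m)$. This colouring is perfectly balanced and has each colour class independent, so $\Delta=0$ and $\iota=0$; the whole task is then to recolour vertices so as to manufacture imbalance while spending as little degree as possible. The one local computation underlying everything is this: since adjacent vertices differ in a single coordinate by a nonzero amount, moving along that coordinate changes $\Sigma$ by that nonzero amount, and for every target colour $c'\neq\Sigma(x)$ there is \emph{exactly one} neighbour of $x$ of colour $c'$ in each coordinate. Hence a vertex has precisely $n$ neighbours of any given other colour, and if one fuses a block of $r$ consecutive colour classes of $\Sigma$ into a single class, every vertex of the fused class acquires exactly $(r-1)n$ same-class neighbours. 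This gives the fundamental trade-off ``fuse $r$ classes $\leftrightarrow$ pay degree $(r-1)n$''.

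For the third case $d\geq n$ this trade-off is used directly. Set $t=\lfloor d/n\rfloor\geq 1$, group the $m$ colours into consecutive blocks of $t+1$ colours, and fuse each block into one class. By the computation above each fused class has maximum degree $tn\leq d$; it has size $(t+1)m^{n-1}$ while the other $t$ colours of its block are emptied, contributing imbalance $2t\,m^{n-1}$ per full group. With $\lfloor m/(t+1)\rfloor$ full groups this yields $\iota(\Pi)\geq 2t\,m^{n-1}\lfloor m/(t+1)\rfloor$, which equals $2m^{n}\,\tfrac{t}{t+1}$ exactly when $(t+1)\mid m$; a leftover partial group is fused as well (it is cheaper in degree) and, if necessary, fine-tuned with a partial merge using the remaining degree budget $d-tn$ to recover the stated bound in the non-divisible case.

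For the first two cases (general $d$, prefactor $m-2$ or $m-1$) the plan is to localise the merging coordinate-wise. I would decompose $\{1,\dots,n\}$ into $\lceil n/d\rceil$ blocks $B$ of size at most $d$, and perform, inside each block, a merge of proper classes costing degree at most $|B|\le d$ and contributing a multiplicative factor of order $m^{|B|-1}$ to the imbalance. Recording each block's effect as a deviation vector of aligned $(e_1-e_0)$-type and using that the cyclic convolution of such vectors has $\ell_1$-norm equal to the \emph{product} of the individual norms (no cancellation as long as one does not wrap around modulo $m$), the target exponent is $\sum_B(|B|-1)=n-\lceil n/d\rceil$, matching the theorem; the prefactor $m-1$ versus $m-2$ then reflects how many of the $m$ classes may participate simultaneously, which is exactly where the parity of $m$ enters.

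The hard part will be the combination step. Combining the block-wise merges naively is additive, so the degrees of the blocks \emph{add up}: a vertex lying in the ``active'' (fused) region of several blocks would collect $|B|$ same-class neighbours from each, blowing past $d$; and any genuinely balanced free direction would convolve the imbalance back to $0$ (convolution with a uniform vector annihilates all deviations). The crux is therefore to arrange a combination whose maximum degree is the \emph{maximum} over blocks rather than the sum, which I would achieve by declaring a vertex active only in the \emph{first} block in which its block-sum is special, so the active regions are pairwise disjoint and each vertex inherits the degree of a single block. The delicate points — and the places where I expect the real work to lie — are (i) checking that the \emph{receiving} classes, which collect recoloured vertices from several blocks, also keep degree at most $d$; (ii) verifying that under this localisation the perturbations still multiply and do not cancel after reduction modulo $m$; and (iii) handling the parity of $m$, which limits how many consecutive classes can be merged without wrap-around and thereby separates the $m-1$ (odd) and $m-2$ (even) bounds.
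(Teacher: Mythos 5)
Your third case is essentially the paper's own argument in disguise: the canonical colouring $x\mapsto \Sigma(x)\bmod m$ is exactly the lifting map of Lemma~\ref{lm:lifting} with $n'=1$, and fusing consecutive blocks of $t+1$ colour classes is the lift of the $K_m$-partition of Lemma~\ref{lm:complete}; your count that a vertex in a fused block acquires exactly $tn\leq d$ same-class neighbours matches the lift of a degree-$t$ partition. You are also right to be uneasy about the non-divisible case: with $t=\lfloor d/n\rfloor$ the construction yields imbalance $2m^{n-1}\lfloor \frac{tm}{t+1}\rfloor$, which equals $2m^n\frac{t}{t+1}$ only when $(t+1)\mid m$. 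But your deferred ``fine-tuning with the remaining budget $d-tn$'' cannot close this gap, because the literal bound is unattainable in general: for $m=3$, $n=2$, $d=2$ the budget is $0$ and imbalance $9$ would force a class of size at least $8$, which in the $4$-regular graph $H(2,3)$ contains a vertex of degree $4>d$. (This is in fact a rounding slip in the paper's own statement of the third case --- its proof also only delivers $2m^{n-1}\lfloor \frac{tm}{t+1}\rfloor$ --- so your instinct that something extra was needed was sound, but no finer construction can repair it.)

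The genuine gap is in the main cases $d<n$, where what you have is a plan, not a proof. The three ``delicate points'' you defer --- degrees of the receiving classes, non-cancellation modulo $m$, and the parity prefactor --- are not peripheral checks; they are the entire content of the construction, and since your proposal never pins down which vertices are recoloured to which class, none of them can even be tested. The paper resolves your ``combination step'' by inverting the structure: instead of combining per-block merges inside $H(n,m)$ (where, as you note, degrees add), it collapses each block of at most $d$ coordinates to a single coordinate via the block-sum projection $\sigma\colon H(n,m)\to H(n',m)$ with $n'=\lceil n/d\rceil$ (Lemma~\ref{lm:lifting}, which multiplies imbalance by $m^{n-n'}$ and degree by at most $\lceil n/n'\rceil$), and puts all the cleverness into an explicit degree-$1$ imbalanced partition of the small quotient (Proposition~\ref{prop:FirstConstruction}): the class of a vertex $x\,\#\,b\,\#\,0^k$ with last nonzero entry $b$ is $\Sigma(x)+\lfloor\frac{b+1}{2}\rfloor \bmod m$. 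Your priority rule (``active in the first block whose block-sum is special'') gropes toward this last-nonzero-coordinate classification, but the actual work is the choice of the map $b\mapsto\lfloor\frac{b+1}{2}\rfloor$ and the verification that it gives maximum degree $1$: for neighbours with equal suffix length one uses that each fibre of this map has size at most $2$, and for unequal suffix lengths that $\lfloor\frac{b+1}{2}\rfloor-b$ and $\lfloor\frac{c+1}{2}\rfloor$ lie in disjoint ranges modulo $m$. Moreover, the dichotomy $m-1$ versus $m-2$ falls out of counting $|\{b\neq 0:\lfloor\frac{b+1}{2}\rfloor=i\}|$ (the middle value is hit once when $m$ is even), not out of any wrap-around constraint on how many classes may be merged, so your proposed explanation of the parity prefactor is aimed at the wrong mechanism. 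Without an analogue of Proposition~\ref{prop:FirstConstruction}, your exponent $n-\lceil n/d\rceil$ and prefactors remain conjectural.
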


Theorem~\ref{imbalancedpartitions} in particular provides large subgraphs with low maximum degree. In Section~\ref{sec:large}, we explore the minimum value of the maximum degree of all induced subgraphs of $H(n,m)$ of a given size. We provide lower bounds for this value using the technique of \emph{supersaturation} and connect this question to results on abelian Cayley graphs~\cite{PT20} and covering codes~\cite{Rod70}.

The study of sensitivity has also been extended beyond Boolean functions without going through graphs. Dafni, Filmus, Lifshitz, Lindzey and Vinyals~\cite{DFLLV21} consider $f\colon \mathcal{X}\rightarrow\{0,1\}$ on different domains such as the symmetric group $\mathcal{X}=S_n$. They show that in this case all classical complexity measures of Boolean functions can also be defined and are polynomially equivalent. In particular, they prove the analogous result to the Sensitivity Conjecture. 
In~\cite{AGMK24} a generalization of the Sensitivity Conjecture to $m$-ary functions, i.e., functions $f\colon \{0,\ldots,m-1\}^n \to \{0,\ldots,m-1\}$, was proposed. This conjecture is implied by the following:

\begin{restatable}[\sf Sensitivity]{thm}{sensitivity}\label{thm.SensitivityConjecture}
Let $n\geq 1$ and $A,B\subseteq \mathbb{R}$ be finite sets and $f\colon A^n \to B$ be a function with sensitivity $s(f)$ and degree $\deg(f)$. Then, $s(f) \geq \sqrt{\frac{\deg(f)}{|A|-1}}$.
\end{restatable}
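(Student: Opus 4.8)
The plan is to reduce the $m$-ary statement to Huang's theorem on the hypercube (the case $m=2$, recorded above as $\sigma(Q^n)=\lceil\sqrt n\rceil$) by restricting $f$ to a suitable combinatorial sub-hypercube of $H(n,m)$, losing only a factor of $|A|-1$ in the degree. Write $m=|A|$ and represent $f$ by its unique polynomial expansion $f=\sum_\beta \hat f_\beta \prod_i x_i^{\beta_i}$, where each exponent satisfies $0\le\beta_i\le m-1$; then $\deg(f)=d$ means there is a monomial with $\hat f_\alpha\neq0$ and $\sum_i\alpha_i=d$. Let $S=\operatorname{supp}(\alpha)$ be its support; since each $\alpha_i\le m-1$ we have $|S|\ge d/(m-1)$.

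First I would set up the restriction. For each $i\in S$ I choose a two-element set $\{b_i,c_i\}\subseteq A$ and for each $j\notin S$ I fix a value $e_j\in A$; restricting $f$ to $\prod_{i\in S}\{b_i,c_i\}\times\prod_{j\notin S}\{e_j\}$ yields a real-valued Boolean function $g$ on the cube $Q^{|S|}$ via the substitution $x_i=b_i+(c_i-b_i)y_i$. The coefficient of the top monomial $\prod_{i\in S}y_i$ in $g$ equals the iterated finite difference $\bigl(\prod_{i\in S}\Delta_i\bigr)f$ evaluated at $x_{\bar S}=e$, where $\Delta_i h=h|_{x_i=c_i}-h|_{x_i=b_i}$; thus $g$ has full degree $|S|$ precisely when this quantity is nonzero. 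A single-coordinate flip in $g$ is a Hamming edge of $H(n,m)$, whence $s(f)\ge s(g)$, and $|S|\ge d/(m-1)$, so it suffices to make $g$ full degree and invoke the (real-valued) hypercube bound $s(g)\ge\sqrt{|S|}$.

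The heart of the argument, and the step I expect to be the main obstacle, is showing that the restriction can be chosen so that $\bigl(\prod_{i\in S}\Delta_i\bigr)f\neq0$. Expanding, this difference equals $\sum_{\beta:\,\beta_i\ge1\ \forall i\in S}\hat f_\beta\prod_{i\in S}(c_i^{\beta_i}-b_i^{\beta_i})\prod_{j\notin S}e_j^{\beta_j}$, a polynomial in the variables $(b_i,c_i)_{i\in S}$ and $(e_j)_{j\notin S}$ whose degree in each single variable is at most $m-1<|A|$. Its part free of every $b_i$ and every $e_j$ is exactly $\sum_{\beta:\,\operatorname{supp}\beta=S}\hat f_\beta\prod_{i\in S}c_i^{\beta_i}$, which is nonzero because it contains the term $\hat f_\alpha\prod_{i\in S}c_i^{\alpha_i}$ and no other monomial can cancel it; hence the whole polynomial is not identically zero. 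Since each variable has degree $<|A|$ and ranges over the set $A$ of size $|A|$, the Combinatorial Nullstellensatz yields a grid point in $A$ (one value per variable) at which the difference is nonzero, and at such a point necessarily $b_i\neq c_i$ for every $i\in S$ (otherwise a factor $c_i^{\beta_i}-b_i^{\beta_i}$ vanishes in every surviving term). This produces the desired genuine two-element restriction.

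Finally I would upgrade Huang's Boolean theorem to the real-valued function $g$. Since $g$ has full degree, its top Fourier coefficient $\sum_y(-1)^{|S|-|y|}g(y)$ is nonzero; decomposing $g$ through its level sets $[g>\theta]$ (a layer-cake decomposition, whose constant part contributes $0$ to this alternating sum) shows that some threshold Boolean function $h=[g>\theta]$ is itself of full degree. Huang's theorem, equivalently $\sigma(Q^{|S|})\ge\sqrt{|S|}$ via Gotsman--Linial, gives $s(h)\ge\sqrt{|S|}$, and $s(g)\ge s(h)$ pointwise because any coordinate separating the two sides of $\theta$ already changes $g$. Chaining $s(f)\ge s(g)\ge s(h)\ge\sqrt{|S|}\ge\sqrt{d/(|A|-1)}$ completes the proof.
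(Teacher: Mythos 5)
Your proof is correct, but it takes a genuinely different route from the paper's. The paper first reduces the \emph{range}: it decomposes $f=\sum_{b\in B}b\,f_b$ into indicator functions, picks some $f_b$ of degree at least $\deg(f)$, and then shrinks the domain one coordinate at a time, using the Lagrange interpolation expansion over the $m$ values of the current coordinate to find two values $a_s\neq a_t$ whose restriction keeps alive a monomial involving at least $D=\lceil \deg(f)/(m-1)\rceil$ variables; Huang's theorem then applies directly to the resulting Boolean function. You instead keep $f$ real-valued and choose the two-element sets for \emph{all} coordinates in one shot: the top coefficient of the restricted cube function is the iterated finite difference $\bigl(\prod_{i\in S}\Delta_i\bigr)f$, which you correctly show is a nonzero polynomial in the restriction parameters $(b_i,c_i,e_j)$ of individual degree at most $m-1<|A|$, so the non-vanishing-on-grids form of the Combinatorial Nullstellensatz produces a good restriction --- with the nice touch that $b_i\neq c_i$ holds automatically at any nonvanishing point, since $b_i=c_i$ kills every surviving term. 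The price of keeping the range real is that you must extend Huang's theorem to real-valued functions, which you do via the layer-cake decomposition $g=b_1+\sum_t(b_{t+1}-b_t)[g>b_t]$: the top alternating sum is linear in this decomposition and the constant contributes zero, so some threshold function inherits full degree, and its sensitivity lower-bounds that of $g$ pointwise. This threshold step is absent from the paper, which sidesteps it by Booleanizing the range first. Both arguments lose exactly the factor $|A|-1$ (your $|S|\geq d/(m-1)$ versus the paper's $D$) and yield the same bound; the paper's coordinate-by-coordinate iteration is more elementary and self-contained, while your one-shot argument is shorter at its core and isolates a reusable statement, namely that any monomial of full support in the canonical representation survives restriction to a suitable combinatorial subcube.
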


We close the paper with some open questions in Section~\ref{sec:last}.

\section{Maximum degree of imbalanced partitions of the Hamming graph}

\begin{prop}[{\sf Imbalanced partitions of degree $1$}]
\label{prop:FirstConstruction}
For all integers $m\geq 1, n\geq 2$ there exists a partition $\Pi$ of $H(n,m)$ into $m$ sets with maximum degree $\Delta(\Pi)\leq 1$ and imbalance \[\iota(\Pi)= \begin{cases} m-2 & \text{if } m \text{ is even,}\\
m-1 & \text{if } m \text{ is odd.}\end{cases}\]
\end{prop}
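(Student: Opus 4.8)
The plan is to build the partition as an $m$-coloring $c\colon\{0,\dots,m-1\}^n\to\mathbb Z_m$ and to measure everything against the \emph{sum coloring} $c_0(x)=\sum_{i=1}^n x_i \bmod m$, which is proper (so $\Delta=0$) and in which every color class has exactly $m^{n-1}$ vertices, hence is perfectly balanced. The whole point is to spend the extra budget allowed by $\Delta(\Pi)\le 1$ in order to unbalance it. The first thing I would record is a rigidity statement: if a coloring is proper on every ``$x_n$-fiber'' hyperplane $H_t=\{x:x_n=t\}\cong H(n-1,m)$, then it is automatically balanced, because a proper $m$-coloring of $H(n-1,m)$ must give every color exactly $m^{n-2}$ vertices (the independence number $m^{n-2}$ forces equality). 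Hence any imbalanced $\Pi$ must contain a genuinely monochromatic edge inside some hyperplane; the strategy is to introduce as few such edges as possible and to confine them to a single place, then to transport the resulting imbalance up one dimension at a time.

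The seed is $n=1$: on $H(1,m)=K_m$ a partition into classes of size $\le 1$ is forced to be balanced, but classes of size $\le 2$ (i.e.\ $\Delta\le 1$) are allowed, so I can pair up vertices into dominoes. Pairing the $\lfloor (m-1)/2\rfloor$ nonfixed orbits of the involution $t\mapsto -t$ on $\mathbb Z_m$ into dominoes, leaving the $1$ (resp.\ $2$) fixed point(s) as singletons, and leaving the remaining colors empty, produces imbalance exactly $m-1$ for odd $m$ and $m-2$ for even $m$. This is precisely where the even/odd gap in the statement comes from, since the number of nonfixed points of negation is $m-1$ or $m-2$.

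For the inductive step from $H(k,m)$ to $H(k+1,m)$, write a vertex as $(\hat x,t)$ with $\hat x\in\{0,\dots,m-1\}^{k}$ and $t\in\mathbb Z_m$. I would place the inductively constructed imbalanced coloring $\gamma$ of $H(k,m)$ on the single hyperplane $H_0$, and color each remaining hyperplane $H_t$ ($t\ge 1$) by a \emph{proper} shifted sum coloring $\psi_t=c_0+\rho(t)$ with $\rho$ injective. Since each $\psi_t$ is proper it contributes $m^{k-1}$ vertices to every color, so color $a$ receives $(m-1)m^{k-1}+N_\gamma(a)$ vertices, whose deviation from $m^{k}$ equals the deviation $N_\gamma(a)-m^{k-1}$ of $\gamma$; thus the imbalance is preserved verbatim through the induction. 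The only possibly monochromatic edges of the lift are the horizontal matching inherited from the dominoes of $\gamma$ inside $H_0$, together with the ``column'' edges joining $(\hat x,0)$ to the unique vertex of its $K_m$-fiber that repeats the color $\gamma(\hat x)$.

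The main obstacle is exactly the interaction of these two kinds of edges. A vertex $(\hat x,0)$ lying on a domino of $\gamma$ already has a horizontal conflict, and it acquires a second, column conflict unless the colors $\{\psi_t(\hat x):t\ge 1\}$ are arranged to omit precisely $\gamma(\hat x)$, making its fiber rainbow. Because the two endpoints of a domino are adjacent, properness of each $\psi_t$ forbids aligning both endpoints simultaneously, so one endpoint of every domino threatens to reach degree $2$. Resolving this ``stacking'' is the heart of the proof: I would choose the family $\{\psi_t\}_{t\ge1}$ (equivalently, a system of shifts of the sum coloring) so that, fiber by fiber along the matching, the omitted color is steered to $\gamma(\hat x)$ for one endpoint, and the residual conflict of the other endpoint is absorbed by a local recoloring confined to its fiber that reroutes the column edge without creating any new horizontal conflict. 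Keeping these repairs vertex-disjoint across the $\lfloor(m-1)/2\rfloor$ dominoes, and verifying that no repair reintroduces a monochromatic edge or disturbs the balance, is the delicate bookkeeping I expect to be the crux; once it is carried out, the resulting $\Pi$ satisfies $\Delta(\Pi)\le 1$ with the imbalance inherited from the seed.
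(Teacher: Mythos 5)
Your overall architecture --- perturb the proper sum coloring $c_0$, seed with dominoes on $K_m$ (your involution $t\mapsto -t$ correctly reproduces the $m-1$ versus $m-2$ dichotomy), lift through hyperplanes $H_t$ carrying proper shifted colorings so that imbalance is preserved --- is sound, and you correctly isolate the crux: at most one endpoint of each domino can have its fiber made rainbow, so the other endpoint threatens degree $2$. But the repair you propose for that second endpoint, ``a local recoloring confined to its fiber that reroutes the column edge without creating any new horizontal conflict,'' cannot exist. In a hyperplane $H_t\cong H(k,m)$ carrying a shifted sum coloring $\psi_t=c_0+\rho(t)$, the neighbors of any vertex $\hat x$ realize every color other than $\psi_t(\hat x)$ exactly once per coordinate direction, hence exactly $k$ times each. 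So recoloring a single vertex $(\hat x,t)$ to \emph{any} other color creates $k$ new monochromatic horizontal edges at that very vertex, and since a fiber meets each hyperplane in exactly one vertex, every ``repair confined to the fiber'' is a union of such single-vertex recolorings. This violates $\Delta(\Pi)\le 1$ outright once $k\ge 2$, i.e.\ already in the inductive step producing $H(3,m)$. Note also that with one injective shift system $\rho$ the omitted color of the fiber over $\hat x$ is $c_0(\hat x)+r_0$ for the \emph{same} residue $r_0$ in every fiber; choosing $r_0$ and the class labels of $\gamma$ can align one endpoint per domino, but that exhausts the freedom, leaving only the impossible recoloring for the second endpoint. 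So what you defer as ``delicate bookkeeping'' is not bookkeeping: it is a genuine obstruction forcing any fix to propagate across hyperplanes, and the proof is incomplete at exactly its critical step.

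For comparison, the paper sidesteps the hyperplane-by-hyperplane viewpoint with a single global formula: writing each nonzero vertex uniquely as $x\ \#\ b\ \#\ 0^k$ with $b\neq 0$ (strip trailing zeros), it assigns color $\Sigma(x)+\left\lfloor\frac{b+1}{2}\right\rfloor \bmod m$, and places $0^n$ in class $0$. The pairing $b\mapsto\lfloor\frac{b+1}{2}\rfloor$ plays exactly the role of your dominoes and yields the same even/odd counts, but the trailing-zero normal form makes the degree verification a clean two-case computation: a monochromatic edge with equal tail lengths forces $x=y$ and $\lfloor\frac{b+1}{2}\rfloor=\lfloor\frac{c+1}{2}\rfloor$ (at most one partner), while for unequal tail lengths the quantities $\lfloor\frac{b+1}{2}\rfloor-b$ and $\lfloor\frac{c+1}{2}\rfloor$ range over disjoint sets of residues mod $m$, so no such edge exists. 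In effect the explicit formula correlates the coloring of each layer with the tail structure rather than using uniform shifts, which is precisely the extra mechanism your inductive scheme is missing.
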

\begin{proof}
    For every $i\in\{0,1,\dots,m-1\}$, let
\[
 S_i = \bigcup_{k=0}^{n-1} \left\{ x  \ \#\ b \ \#\ 0^k : |x| + k + 1 = n,\, b \neq 0,\, \Sigma(x) + \left\lfloor \frac{b+1}{2} \right\rfloor \equiv i \pmod{m} \right\}\ ,
\]
where $\#$ represents concatenation and $\Sigma(x)$ is the sum of the entries of $x$. Moreover, when $x$ is empty  we consider that it sums to $0$. Consider the following induced subgraphs of $H(n,m)$:
    \begin{itemize}
        \item $H_0$ is the induced subgraph on $S_0\cup\{0^n\}$.
        \item $H_i$ is the induced subgraph on $S_i$ for all $i\in\{1,2,\dots,m-1\}.$
    \end{itemize}

For an illustration of the case 
 $m=4$ and $n=3$,
see Figure~\ref{fig:counterexample3}.

    These $m$ induced subgraphs constitute a partition of $H(n,m)$. 
Let us show first that each $S_i$ has maximum degree at most ~$1$. Suppose that $x\ \#\ b\ \#\ 0^k,\ y\ \#\ c\ \#\ 0^\ell \in S_i$ are neighbors. We consider two cases:
\begin{enumerate}
    \item $k = \ell$. In this case $x = y$ and so $\lfloor \frac{b+1}{2} \rfloor = \lfloor \frac{c+1}{2} \rfloor$. Given $b$ there is at most one other option for $c$.
    \item $k \neq \ell$, without loss of generality $k > \ell$. In this case $y= x \ \#\ b \ \#\ 0^{k-\ell-1}$. So
    \[
     \Sigma(x) + \left\lfloor \frac{b+1}{2} \right\rfloor \equiv \Sigma(x) + b + \left\lfloor \frac{c+1}{2} \right\rfloor \pmod{m} \ \Rightarrow\ 
     \left\lfloor \frac{b+1}{2} \right\rfloor - b \equiv \left\lfloor \frac{c+1}{2} \right\rfloor \pmod{m}\ .
    \]
    The left-hand side is $-\left\lceil \frac{b+1}{2} \right\rceil+1$, and so it ranges from $-\left\lceil \frac{m}{2}\right\rceil +1$ to $0$, which is to say from $\left\lfloor \frac{m}{2} \right\rfloor + 1$ to $m$.
    The right-hand side ranges from $1$ to $\left\lfloor\frac{m}{2} \right\rfloor$.
    So the two sides cannot be equal.
\end{enumerate}
The only vertex of $H(n,m)$ not covered by the sets $S_i$ is $0^n$. Its neighbors are of the form $0^i \ \#\ b \ \#\ 0^{n-i-1}$, where $b \neq 0$. In this case $x = 0^i$ and so $\Sigma(x) + \left\lfloor \frac{b+1}{2} \right\rfloor = \left\lfloor \frac{b+1}{2} \right\rfloor \in \{1, \ldots, \left\lfloor \frac{m}{2} \right\rfloor\}$. Therefore $0^n$ does not have neighbors in $S_0$ and hence $H_0$ maintains the maximum degree at most~$1$.

To complete the analysis of the construction, let us compute the sizes of the sets $S_i$:
\[
 |S_i| = (m-1)\sum_{j=1}^{n-1} m^{j-1} + \left|\left\{ b \neq 0 : \left\lfloor \frac{b+1}{2} \right\rfloor = i\right\}\right|.
\]
The first summand equals $m^{n-1} - 1$. When $i = 1$, the second summand is $2$. Hence the partition is imbalanced. In fact, it is possible to compute the exact imbalance of this construction from the value of the second summand in the previous expression.

On the one hand, when $m$ is even, the second summand takes the following values, which lead to an imbalance of $m-2$:

\begin{itemize}
    \item $0$ when $i=0$,
    \item $2$ for $i\in\{1,2,\dots,\frac{m}{2}-1\}$,
    \item $1$ when $i=\frac{m}{2}$, and
    \item $0$ for $i\in\{\frac{m}{2}+1,\dots,m-1\}$.
\end{itemize}

On the other hand, when $m$ is odd, the imbalance is equal to $m-1$. The difference compared to the previous case is that $|\{b\neq 0 : \lfloor\frac{b+1}{2}\rfloor=i\}|$ equals $2$ when $i\in\{1,2,\dots,\frac{m-1}{2}\}$ and $0$ otherwise.\end{proof}

 \begin{figure}[!ht]
 \centering
 	\includegraphics[width=9cm]{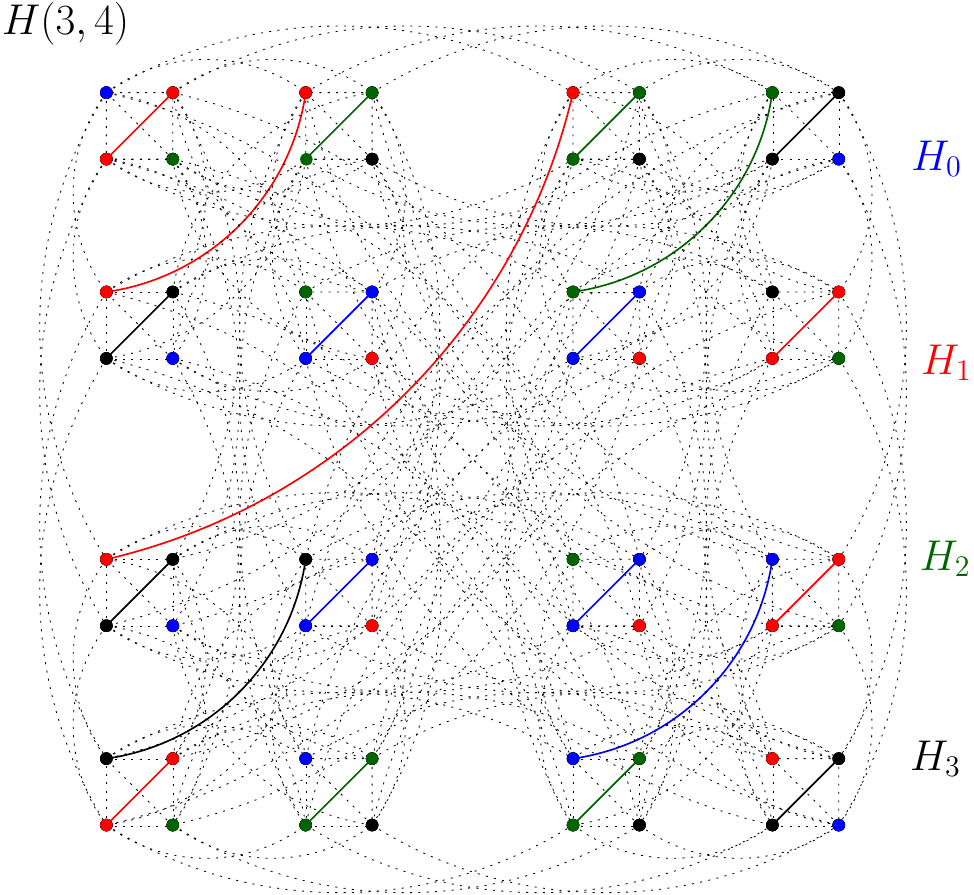}\caption{The construction in Proposition \ref{prop:FirstConstruction} with $m=4$ and $n=3$} 
 	\label{fig:counterexample3}
 \end{figure}

The case $n=1$ for arbitrary $d$ can be analyzed independently.
\begin{lem} 
\label{lm:complete}
For all integers $0 \leq d < m$ there exists a partition $\Pi$ of $K_m$ into $m$ sets with maximum degree $\Delta(\Pi)\leq d$ and imbalance \[\iota(\Pi) = 2 \left\lfloor\frac{d m}{d+1}\right\rfloor.\]

\end{lem}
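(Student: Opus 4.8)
The plan is to translate everything into an optimisation over part sizes and to exploit that $K_m$ has exactly as many vertices as there are parts. Since $n=1$ we have $H(1,m)=K_m$, and the subgraph induced on a part $V_i$ is the complete graph on $|V_i|$ vertices, whose maximum degree is $|V_i|-1$ when $V_i\neq\emptyset$. Hence the requirement $\Delta(\Pi)\leq d$ is equivalent to $|V_i|\leq d+1$ for every $i$. Moreover the reference value in the imbalance is $m^{n-1}=1$, so $\iota(\Pi)=\sum_{i=1}^m\bigl||V_i|-1\bigr|$.

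The key observation is that this imbalance is nothing but twice the number of empty parts. Since $\sum_{i=1}^m|V_i|=m$ equals the number of parts, we have $\sum_{i=1}^m(|V_i|-1)=0$, so the total positive deviation equals the total negative deviation; as the only way to fall below $1$ is to be $0$, this gives
\[
\iota(\Pi)=\sum_{i=1}^m\bigl||V_i|-1\bigr|=2\cdot\#\{i:V_i=\emptyset\}.
\]
Maximising the imbalance therefore amounts to maximising the number of empty parts, equivalently minimising the number of nonempty ones. As each nonempty part holds at most $d+1$ vertices, at least $\lceil m/(d+1)\rceil$ of them are needed to accommodate all $m$ vertices.

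To realise this bound I would simply distribute the $m$ vertices of $K_m$ into exactly $\lceil m/(d+1)\rceil$ parts, placing at most $d+1$ vertices in each (possible since $\lceil m/(d+1)\rceil(d+1)\geq m$), and leave the remaining $m-\lceil m/(d+1)\rceil$ parts empty. Such a $\Pi$ satisfies $\Delta(\Pi)\leq d$ and, by the identity above, has imbalance $\iota(\Pi)=2\bigl(m-\lceil m/(d+1)\rceil\bigr)$.

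The only remaining point, and the one requiring a moment's care, is the arithmetic identity $m-\lceil m/(d+1)\rceil=\lfloor dm/(d+1)\rfloor$, which follows from $\lfloor dm/(d+1)\rfloor=\lfloor m-m/(d+1)\rfloor=m-\lceil m/(d+1)\rceil$ via $\lfloor -x\rfloor=-\lceil x\rceil$. This yields $\iota(\Pi)=2\lfloor dm/(d+1)\rfloor$, as claimed, and it is worth sanity-checking the extremes $d=0$ (every part forced to be a singleton, imbalance $0$) and $d=m$ (a single part equal to all of $K_m$, imbalance $2(m-1)$). There is no genuine obstacle beyond spotting the ``imbalance $=$ twice the number of empty parts'' identity, which makes the extremal configuration transparent.
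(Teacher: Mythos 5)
Your proof is correct and uses essentially the same construction as the paper: pack the $m$ vertices into $\lceil m/(d+1)\rceil$ parts of size at most $d+1$ and leave the remaining parts empty, then verify $\iota(\Pi)=2\left(m-\lceil m/(d+1)\rceil\right)=2\lfloor dm/(d+1)\rfloor$. The only cosmetic difference is that you compute the imbalance via the clean identity ``$\iota(\Pi)$ equals twice the number of empty parts'' (which also shows your configuration is optimal), whereas the paper sums the deviations of the explicit part sizes directly.
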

\begin{proof}
Let $r \in \{0,\ldots,d\}$ be the remainder of the ceiling division between $m$ and $d+1$, that is, $r = \left\lceil \frac{m}{d+1} \right\rceil (d+1) - m$. 

Consider the partition $\Pi$ obtained by arranging the $m$ vertices of $K_m$ between $\left\lceil \frac{m}{d+1} \right\rceil - 1$ sets of size $d+1$, a set of size $(d+1) - r,$ and $m - \left\lceil \frac{m}{d+1} \right\rceil$ empty sets. The maximum degree of such a partition is $d$ and  $\iota(\Pi) = \left(\left\lceil \frac{m}{d+1} \right\rceil - 1\right) d + (d+1) - r - 1 + m - \left\lceil \frac{m}{d+1} \right\rceil = 2\left(m-\left\lceil\frac{m}{d+1}\right\rceil\right) = 2 \left\lfloor\frac{d m}{d+1}\right\rfloor$.
\end{proof}

An idea already present in ~\cite{PT24} allows to lift partitions of given degree and imbalance. 

\begin{lem}\label{lm:lifting}
Let $m\geq 2,\ n\geq d\geq  1$. If for $d'\leq d$ and $n'=\left\lceil\frac{n}{\left\lfloor\frac{d}{d'}\right\rfloor}\right\rceil$ the graph  $H(n',m)$ admits a partition $\Pi'$ into $m$ sets with $\Delta(\Pi')\leq d'$, 
then $H(n,m)$ admits a partition $\Pi$ into $m$ sets with $\Delta(\Pi)\leq d$ and $\iota(\Pi) =  m^{n-n'} \iota(\Pi')$.
\end{lem}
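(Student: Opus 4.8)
The plan is to realize the lifted partition as the pullback of $\Pi'$ under a coordinate-folding map. Write $t=\lfloor d/d'\rfloor$, so that $t\geq 1$ (as $d'\leq d$) and $td'\leq d$, and recall $n'=\lceil n/t\rceil$, which gives $n'\leq n$ and $n\leq tn'$. Because $n/n'\leq t$ and $t$ is an integer, I may fix a surjection $\pi\colon\{1,\dots,n\}\to\{1,\dots,n'\}$ all of whose fibers $\pi^{-1}(p)$ have size at most $t$ (distribute the $n$ coordinates among $n'$ nonempty blocks as evenly as possible, so that the largest block has $\lceil n/n'\rceil\leq t$ elements). Identifying the classes of $\Pi'$ with a coloring $c'\colon\{0,\dots,m-1\}^{n'}\to\{0,\dots,m-1\}$, I define the folding $u(v)\in\{0,\dots,m-1\}^{n'}$ of a vertex $v\in\{0,\dots,m-1\}^{n}$ by $u(v)_p=\sum_{q\in\pi^{-1}(p)}v_q \bmod m$, and let $\Pi$ be the partition induced by the coloring $c(v)=c'(u(v))$.

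First I would verify the imbalance. For a fixed target $w\in\{0,\dots,m-1\}^{n'}$, the number of $v$ with $u(v)=w$ is $\prod_{p}m^{|\pi^{-1}(p)|-1}=m^{n-n'}$, since within each block $p$ one coordinate is pinned by the prescribed residue $w_p$ and the rest are free. Hence each class of $\Pi$ has size exactly $m^{n-n'}$ times that of the corresponding class of $\Pi'$, so $\iota(\Pi)=m^{n-n'}\iota(\Pi')\geq m^{n-n'}i'$.

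The main work is the degree bound. Fix $v$ and consider a neighbour $v'$ differing from $v$ only in coordinate $q$; set $p=\pi(q)$. Then $u(v')$ differs from $u(v)$ only in position $p$, with $u(v')_p=u(v)_p+(v'_q-v_q)\bmod m$, and as $v'_q$ ranges over its $m-1$ alternative values the vector $u(v')$ runs bijectively over all $m-1$ direction-$p$ neighbours of $u(v)$ in $H(n',m)$. Consequently the alternatives for coordinate $q$ that keep the colour fixed are in bijection with the monochromatic direction-$p$ neighbours of $u(v)$ in $\Pi'$; writing $N_p$ for their number, coordinate $q$ contributes exactly $N_p$ monochromatic neighbours of $v$. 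Summing over all $n$ coordinates gives a monochromatic degree of $\sum_{q=1}^{n}N_{\pi(q)}=\sum_{p=1}^{n'}|\pi^{-1}(p)|\,N_p\leq t\sum_{p=1}^{n'}N_p$, and $\sum_{p}N_p$ is precisely the monochromatic degree of $u(v)$ in $\Pi'$, which is at most $d'$. Therefore $\Delta(\Pi)\leq td'\leq d$.

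The step I expect to require the most care is this degree computation: one must check that, for each fixed coordinate $q$, varying its value sweeps out all direction-$\pi(q)$ neighbours of the folded vertex exactly once (this is where working modulo $m$ is essential), and that the bound $|\pi^{-1}(p)|\leq t$ coming from the choice of $\pi$ is exactly what converts the per-vertex degree $d'$ of $\Pi'$ into the budget $td'\leq d$. The imbalance and class-size bookkeeping are then routine.
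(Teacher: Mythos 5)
Your proposal is correct and follows essentially the same route as the paper: both pull back the partition $\Pi'$ along the coordinate-folding map $x\mapsto\bigl(\sum_{q\in\pi^{-1}(p)}x_q \bmod m\bigr)_p$ with blocks of size at most $\lceil n/n'\rceil\leq\lfloor d/d'\rfloor$, obtaining the imbalance from the uniform fiber size $m^{n-n'}$ and the degree bound from the fact that each monochromatic neighbour of the folded vertex lifts to at most (in your version, exactly) $|\pi^{-1}(p)|$ monochromatic neighbours per block. Your bijective per-coordinate count is a slightly sharper phrasing of the paper's property that at most $\lceil n/n'\rceil$ lifts of a given neighbour of $\sigma(x)$ are adjacent to $x$, but the argument is the same.
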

\begin{proof}
Partition $[n]$ into $n'$ sets $P_1, \ldots, P_{n'}$ such that $|P_i|\leq \lceil\frac{n}{n'}\rceil$ for all $i\in [n']$. Define the mapping $\sigma\colon H(n,m)\to H(n',m)$ that maps vertex $x$ to $(\sum_{i\in P_1}x_i, \ldots, \sum_{i\in P_{n'}}x_i)$. We note that:\begin{enumerate}
    \item for all $x'\in H(n',m)$, $|\sigma^{-1}(x')|=m^{n-n'}$,
    \item if $x,y$ are adjacent in $H(n,m)$, then $\sigma(x), \sigma(y)$  are adjacent in $H(n',m)$, and
    \item if $\sigma(x),y'$ are adjacent in $H(n',m)$, then there are at most $\lceil\frac{n}{n'}\rceil$ vertices $y\in \sigma^{-1}(y')$ that are adjacent to $x$ in $H(n,m)$.
\end{enumerate}

Given $\Pi'=\{V'_1, \ldots, V'_m\}$, define $\Pi=\{V_1, \ldots, V_m\}$, where $V_i=\sigma^{-1}(V'_i)$ for all $i\in[m]$. If $x,y$ are adjacent in some $V_i$, then by (2) $\sigma(x), \sigma(y)$  are adjacent in $V'_i$. Hence all neighbors $y$ of $x$ in $V_i$ must arise from a pair $\sigma(x),y'$ of adjacent vertices in $H(n',m)$, where $y\in \sigma^{-1}(y')$. By (3) and since $\sigma(x)$ has at most $d'$ neighbors in $V'_i$, we have that $x$ has at most $d'\lceil\frac{n}{n'}\rceil\leq d$ neighbors in $V_i$. 

Moreover, we can compute \[ \iota(\Pi)=\sum_{i=1}^m||V_i|-m^{n-1}|=\sum_{i=1}^m||\sigma^{-1}(V_i')|-m^{n-1}|=m^{n-n'}\sum_{i=1}^m||V'_i|-m^{n'-1}|=m^{n-n'}\iota(\Pi'). \qedhere \]
\end{proof}

\imbalancedpartitions*
\begin{proof}
Note that for $m=1,2$ there is nothing to show.
If $d<n$ consider the partition $\Pi'$ of $H(\lceil \frac{n}{d}  \rceil,m)$ described in Proposition \ref{prop:FirstConstruction}. By Lemma \ref{lm:lifting}, $H(n,m)$ admits a partition $\Pi$ with $\Delta(\Pi) \leq d$ and $\iota(\Pi) = m^{\lfloor \frac{(d-1) n}{d} \rfloor } \iota (\Pi')$.

If $d  \geq n$, consider the partition $\Pi'$ of $H(1,m) = K_m$ of Lemma \ref{lm:complete} with maximum degree $\lfloor \frac{d}{n} \rfloor$. By Lemma \ref{lm:lifting}, $H(n,m)$ admits a partition $\Pi$ with $\Delta(\Pi) \leq \lfloor \frac{d}{n} \rfloor n \leq d$ and $\iota(\Pi) = m^{n-1} \iota (\Pi') = 2 m^{n-1} \bigg\lfloor \frac{m \lfloor \frac{d}{n} \rfloor }{\lfloor \frac{d}{n} \rfloor + 1 } \bigg\rfloor$.
\end{proof}

\section{The maximum degree of large subgraphs of the Hamming graph}\label{sec:large}

The results of the previous section allow to construct relatively large subgraphs of the Hamming graph with small maximum degree.
Let $m\geq 3$. Given an induced subgraph $G'$ of $H(n',m)$ with $\Delta(G') \leq d'$, the proof of Lemma \ref{lm:lifting} shows how to lift $G'$ to a subgraph $G$ of $H(n,m)$ with $\Delta(G) \leq \lceil\frac{n}{n'}\rceil d'$ and $|V(G)| = |V(G')|\, m^{n-n'}$. Exploiting this idea one gets the following.

\begin{lem}\label{lem:ubprevio}
    Let $m\geq 3$ and $n,d\in\mathbb Z_{> 0}$. There exists an induced subgraph $G$ of $H(n,m)$ with $\Delta(G)\leq d$ and \[|V(G)| = \begin{cases}  m^{n-1} + m^{\lfloor \frac{(d-1)n}{d} \rfloor} & \text{if } d<n,\\
\lceil \frac{d+1}{n} \rceil  m^{n-1}& \text{if } n\leq d\leq (m-1)n\, . \end{cases}  \]
\end{lem}
\begin{proof} If $d < n$, we consider the Hamming graph  $H(n',m)$ with $n' = \lceil \frac{n}{d} \rceil$  and take the induced subgraph $H_1$ of size $m^{n'-1}+1$ and $\Delta(H_1)=1$ described in the proof of Proposition \ref{prop:FirstConstruction}. We can lift $H_1$ to get a subgraph $G$ of $H(n,m)$  of size $m^{n-1} + m^{\lfloor \frac{(d-1)n}{d} \rfloor}$ and $\Delta(G) \leq d$. If $n \leq d \leq (m-1)n$, then consider an induced subgraph of $K_m = H(1,m)$ of size $\lfloor \frac{d}{n}\rfloor+1$ (which has maximum degree equal to $\lfloor \frac{d}{n}\rfloor$) and lift it to $H(n,m)$. This provides a subgraph $G$ of size $\lceil \frac{d+1}{n} \rceil  m^{n-1}$ and $\Delta(G) \leq d$.
\end{proof}

 As a consequence of this lemma, we get the following.

\begin{lem}\label{lem:ub}
    Let $m\geq 3$, $n\geq 1$ and $\epsilon>0$ such that $\epsilon\cdot m^n\in\mathbb N$. There exists an induced subgraph $G$ of $H(n,m)$ on at least $\left(\frac{1}{m}+\epsilon\right)m^n$ vertices and 
\[\Delta(G) \leq \begin{cases}  \lceil\frac{n-1}{ \log_m (1/\epsilon)-1}\rceil & \text{if } \epsilon \leq \frac{1}{m^2},\\
\lceil \epsilon m \rceil n& \text{if } \frac{1}{m^2}  < \epsilon \leq \frac{m-1}{m}\, . \end{cases}  \]
\end{lem}
\begin{proof}
     On the one hand, if $\epsilon\leq \frac{1}{m^2}$ then we take $d=\lceil\frac{n-1}{\log_m(\frac{1}{\epsilon})-1}\rceil < n$. By Lemma \ref{lem:ubprevio}, it suffices to show that $(\frac{1}{m}+\epsilon)m^n\leq m^{n-1}+m^{\lfloor\frac{(d-1)n}{d}\rfloor}$.
     Indeed, since $d\geq \frac{n-1}{\log_m(\frac{1}{\epsilon})-1}$, then $\frac{n-1}{d}+1\leq\log_m(\frac{1}{\epsilon})$ and this implies that $\lceil\frac{n}{d}\rceil\leq\log_m(\frac{1}{\epsilon})$; the latter expression is equivalent to $(\frac{1}{m}+\epsilon)m^n\leq m^{n-1}+m^{\lfloor \frac{(d-1)n}{d}\rfloor}$.

     On the other hand, if $\frac{1}{m^2}\leq\epsilon\leq\frac{m-1}{m}$ then we take $d=\lceil\epsilon m\rceil n$. Again, by Lemma \ref{lem:ubprevio} it suffices to observe that $(\frac{1}{m}+\epsilon)m^n\leq \lceil\frac{d+1}{n}\rceil m^{n-1}$ to conclude the proof.
\end{proof}

When $m = 3$ and $n \geq 6$ one can slightly improve the previous results by lifting the induced subgraph on $3^{n-1}+18$ vertices of $H(n,3)$ and maximum degree $1$ described in \cite{PT24}.

In the rest of the section we provide lower bounds for the maximum degree of an induced subgraph of $H(n,m)$ of given size.

\begin{prop}\label{prop:lb}
    Let $m\geq 2$, $n\geq 1$ and $\epsilon>0$ such that $\epsilon\cdot m^n\in\mathbb N$. Every induced subgraph $G$ of $H(n,m)$ on at least $(\frac{1}{m}+\epsilon)m^n$ vertices has $\Delta(G)\geq \frac{2\epsilon n}{(m-1)(\frac{1}{m}+\epsilon)}$.
\end{prop}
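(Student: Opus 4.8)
The plan is to bound the maximum degree from below by the average degree of $G$, which equals $2e(G)/|V(G)|$ where $e(G)$ is the number of edges of $G$; it then suffices to show that $e(G)$ is large. To count edges I would decompose $H(n,m)$ along its $n$ coordinate directions: for each $i\in[n]$, fixing the remaining $n-1$ coordinates yields one of $m^{n-1}$ \emph{lines}, each a copy of $K_m$, and every edge of $H(n,m)$ lies in exactly one such line. Writing $k_L=|V(G)\cap L|$, the edges of $G$ lying in direction $i$ number $\sum_{L}\binom{k_L}{2}$, while $\sum_{L}k_L=|V(G)|$ since each vertex meets exactly one line per direction.

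The heart of the argument is a per-line supersaturation inequality: for every integer $0\le k\le m$ with $m\ge 2$ one has $\binom{k}{2}\ge \frac{k-1}{m-1}$. This is immediate for $k\in\{0,1\}$, and for $k\ge 2$ it reduces, after dividing by $k-1$, to $k(m-1)\ge 2$, which holds. Summing over the $m^{n-1}$ lines in a fixed direction $i$ gives $\sum_{L}\binom{k_L}{2}\ge \frac{1}{m-1}\bigl(|V(G)|-m^{n-1}\bigr)$, and summing this over the $n$ directions yields $e(G)\ge \frac{n}{m-1}\bigl(|V(G)|-m^{n-1}\bigr)$.

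Combining the two steps, $\Delta(G)\ge \frac{2e(G)}{|V(G)|}\ge \frac{2n}{m-1}\Bigl(1-\frac{m^{n-1}}{|V(G)|}\Bigr)$. The right-hand side is increasing in $|V(G)|$, and since $|V(G)|>(\frac{1}{m}+\epsilon)m^n=(1+\epsilon m)m^{n-1}$, substituting this threshold bounds $m^{n-1}/|V(G)|$ above by $\frac{1}{1+\epsilon m}$; a short simplification then rewrites $\frac{2n}{m-1}\cdot\frac{\epsilon m}{1+\epsilon m}$ as exactly $\frac{2\epsilon n}{(m-1)(\frac{1}{m}+\epsilon)}$. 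The only genuine step is isolating the termwise inequality $\binom{k}{2}\ge (k-1)/(m-1)$; everything else is double counting over the $n$ line directions and a monotone substitution. I expect the main subtlety to be the calibration of this per-line bound: a sharper convexity (Jensen) estimate applied to $\sum_L\binom{k_L}{2}$ would yield a stronger but less clean bound, whereas the linear estimate above is precisely what produces the stated normalized constant.
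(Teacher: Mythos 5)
Your proof is correct and follows essentially the same route as the paper: the paper picks a uniformly random copy of $K_m$ along a random coordinate direction and applies Markov's inequality to $m-X$ together with the conditional bound $\Pr[\text{edge in } G \mid X\ge 2]\ge 1/\binom{m}{2}$, which is exactly the probabilistic phrasing of your pointwise supersaturation inequality $\binom{k}{2}\ge\frac{k-1}{m-1}$ summed over the $m^{n-1}$ lines in each direction. Both arguments yield the identical edge bound $e(G)\ge\frac{n}{m-1}\bigl(|V(G)|-m^{n-1}\bigr)$ and finish by comparing the average degree $2e(G)/|V(G)|$ against the threshold $\bigl(\frac{1}{m}+\epsilon\bigr)m^n$.
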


\begin{proof} It suffices to prove the result for $|V(G)|=(\frac{1}{m}+\epsilon)m^n$.

\vspace{1mm}
Let us consider the following way to choose a random edge in $H(n,m)$:
\begin{enumerate}
    \item Choose a random coordinate $i$.
    \item Choose a random assignment to the coordinates other than $i$ to get a copy of $K_m$.
    \item Choose a random edge in the copy of $K_m$.
\end{enumerate}

Let $X$ be the random variable representing the number of vertices in the intersection of the induced subgraph $G$ with a random copy of $K_m$. Then $\mathbb{E}[m-X] = m -\frac{|V(G)|\cdot n}{n\cdot m^{n-1}} = m - 1 - \epsilon m$ and $m - X \geq 0$, and so according to Markov's inequality,
\[
 \Pr[X \leq 1] = \Pr[m - X \geq m - 1] \leq \frac{m - 1 - m\epsilon}{m - 1} = 1 - \frac{m}{m-1} \epsilon \ \Rightarrow\ \Pr[X \geq 2] \geq \frac{m}{m-1} \epsilon.
\]
If $X \geq 2$ then the probability that a random edge of $K_m$ connects two points in $G$ is at least $\frac{1}{\binom{m}{2}}$, and so the probability that a random edge connects two points in $G$ is at least
\[
 \frac{2}{(m-1)^2} \epsilon.
\]
It follows that the average degree of a vertex in $G$ is at least
\[
 \frac{2\cdot \frac{2}{(m-1)^2} \epsilon\cdot |E(H(n,m))|}{|V(G)|}=\frac{\frac{4}{(m-1)^2} \epsilon \cdot \frac{m-1}{2} n m^n}{(\frac{1}{m} + \epsilon) m^n} =
 \frac{2\epsilon n}{(m-1)(\frac{1}{m}+\epsilon)} \ .
\]
In particular, there is a vertex of at least this degree.
\end{proof}

Since $H(n,m)$ is the Cayley graph of an abelian group, by~\cite{PT20} we get that 

\begin{obs}
Let $m\geq 3$ and $n\geq 1$. Every induced subgraph $G$ of $H(n,m)$ on more than $\frac{1}{2}m^n$ vertices has $\Delta(G)\geq \sqrt{\frac{(m-1)n}{2}}$.
\end{obs}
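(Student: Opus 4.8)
The plan is to view $H(n,m)$ as an abelian Cayley graph and invoke the theorem of Potechin and Tsang~\cite{PT20}. Concretely, $H(n,m)=\mathrm{Cay}(\mathbb{Z}_m^{\,n},S)$ with symmetric connection set $S=\{c\,e_i : i\in[n],\ c\in\{1,\dots,m-1\}\}$; since $S$ generates $\mathbb{Z}_m^{\,n}$ and omits $0$, the graph is connected, simple, and regular of degree $\Delta=(m-1)n$. Their result (an extension of Huang's signed-adjacency-matrix argument to abelian Cayley graphs) guarantees that every induced subgraph on more than half of the $m^n$ vertices has maximum degree at least $\sqrt{a+b}$, where $a=|\{s\in S:2s=0\}|$ counts the involutions in $S$ and $b$ counts the inverse pairs $\{s,-s\}$ among the remaining generators.

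It then remains to bound $a+b$ from below. The two counts satisfy $a+2b=|S|=\Delta=(m-1)n$, because every generator is either an involution or belongs to exactly one inverse pair. I would now simply observe that, since $a\geq 0$,
\[
 a+b=\frac{(a+2b)+a}{2}\geq\frac{a+2b}{2}=\frac{(m-1)n}{2},
\]
so that $\Delta(G)\geq\sqrt{a+b}\geq\sqrt{\tfrac{(m-1)n}{2}}$, which is exactly the claim. If one prefers explicit values, the involutions in $S$ are precisely the elements $\tfrac{m}{2}e_i$, so $a=n$ and $b=\tfrac{(m-2)n}{2}$ when $m$ is even, while $a=0$ and $b=\tfrac{(m-1)n}{2}$ when $m$ is odd; the bound $\sqrt{a+b}$ is then $\sqrt{mn/2}$ in the even case and $\sqrt{(m-1)n/2}$ in the odd case, and taking the weaker of the two recovers the stated parity-free estimate.

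The only real point of care is to apply~\cite{PT20} with the correct invariant. The naive guess would be to use the degree $\Delta=(m-1)n$ directly and claim $\Delta(G)\geq\sqrt{(m-1)n}$, but for $m\geq 3$ we have $b>0$, so most generators $c\,e_i$ are genuinely paired with $(m-c)\,e_i$ and contribute only $b$ rather than $2b$ to the quantity $a+b$ governing the bound; this halving of the non-involutory part is exactly what produces the factor $\tfrac12$. Thus the whole content is the elementary bookkeeping $a+b\geq\Delta/2$, together with checking that $H(n,m)$ meets the hypotheses of the Potechin--Tsang theorem; no further estimation is required.
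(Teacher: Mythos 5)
Your proposal is correct and takes essentially the same route as the paper: the paper's entire proof of this observation is to view $H(n,m)$ as a Cayley graph of the abelian group $\mathbb{Z}_m^{\,n}$ with connection set of size $(m-1)n$ and invoke the Potechin--Tsang theorem~\cite{PT20}, exactly as you do. Your bookkeeping with the involution count $a$ and pair count $b$ (including the sharper $\sqrt{mn/2}$ bound for even $m$) is a correct elaboration of how that theorem yields the parity-free estimate $\sqrt{(m-1)n/2}$, which the paper leaves implicit.
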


For a graph $G$, denote by $\gamma(G)$ its \emph{domination number}. Then any set on more than $|V(G)|-\gamma(G)$ vertices of a regular graph $G$ induces a subgraph of maximum degree $\Delta(G)$. The domination number of Hamming graphs has been studied, also from the equivalent perspective of covering codes, see the book~\cite{CHLL97}. Using a classical result~\cite{Rod70} one gets:
\begin{obs}
    Let $m\geq 2$ and $n\geq 1$. Every induced subgraph $G$ of $H(n,m)$ on more than $m^n-\max\left(\frac{m^{n-1}}{n-1},\frac{m^n}{(m-1)n+1}\right)$ vertices has $\Delta(G)\geq (m-1)n$.
\end{obs}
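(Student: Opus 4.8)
The plan is to deduce this from the domination number bound for Hamming graphs, using the elementary fact stated just before the observation: in a $k$-regular graph $G$, any vertex set $S$ with $|S| > |V(G)| - \gamma(G)$ induces a subgraph of maximum degree exactly $\Delta(G) = k$. Indeed, if some vertex $v \in S$ had degree strictly less than $k$ inside $S$, then at least one neighbor of $v$ would lie outside $S$, so $v$ itself would be dominated by $V(G) \setminus S$; iterating this reasoning, if \emph{every} vertex of $S$ had degree below $k$ inside $S$, then $V(G) \setminus S$ would be a dominating set, forcing $|V(G) \setminus S| \geq \gamma(G)$, i.e. $|S| \leq |V(G)| - \gamma(G)$, a contradiction. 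Since $H(n,m)$ is regular of degree $(m-1)n$, I would conclude that any subgraph on more than $m^n - \gamma(H(n,m))$ vertices has maximum degree $(m-1)n$.

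The substance of the argument is therefore entirely in supplying a good lower bound for $\gamma(H(n,m))$, equivalently an \emph{upper} bound on the size of $V(G)\setminus S$ that still works, via known estimates on covering codes. The quantity $\max\!\left(\frac{m^{n-1}}{n-1},\frac{m^n}{(m-1)n+1}\right)$ appearing in the statement should be read as a lower bound on $\gamma(H(n,m))$, i.e. $\gamma(H(n,m)) \geq \max\!\left(\frac{m^{n-1}}{n-1},\frac{m^n}{(m-1)n+1}\right)$. The second term is just the sphere-covering (volume) bound: a single vertex dominates itself and its $(m-1)n$ neighbors, so any dominating set has size at least $m^n/\bigl((m-1)n+1\bigr)$. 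The first term is the improvement coming from the classical result of Rodemich~\cite{Rod70} on covering codes (equivalently on $K_m$-covering designs / the domination number of Hamming graphs), which strengthens the naive volume bound for the relevant parameter range.

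The key steps, in order, are: first I would invoke the regularity of $H(n,m)$ together with the domination-number fact to reduce the claim to a lower bound on $\gamma(H(n,m))$; second I would record the trivial volume lower bound $\gamma \geq m^n/((m-1)n+1)$; third I would cite Rodemich's bound~\cite{Rod70} to obtain the sharper term $\gamma \geq m^{n-1}/(n-1)$; and finally I would take the maximum of the two, which immediately yields the stated threshold. I expect the domination-number fact to be immediate once phrased as above, so the only real content is correctly identifying which covering-code inequality from~\cite{Rod70} gives the $\frac{m^{n-1}}{n-1}$ term and confirming it applies in the stated regime of $m$ and $n$; this bookkeeping with the literature is the main (and essentially the sole) obstacle.
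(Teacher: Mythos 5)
Your proposal matches the paper's argument exactly: the paper derives the observation from the same elementary fact that in a regular graph any vertex set on more than $|V(G)|-\gamma(G)$ vertices contains a vertex of full degree, combined with the sphere-covering bound and Rodemich's classical lower bound~\cite{Rod70} on the domination number of $H(n,m)$ (equivalently, on covering codes of radius $1$). Your justification of the domination-number fact and your identification of the two terms in the maximum are both correct, so this is essentially the paper's proof.
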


\section{Lower bounding sensitivity by degree}

Let $A = \{ a_1, \dots, a_m \} \subset \RR$ and let $B = \{ b_1, \dots, b_k \} \subset \RR$. We consider functions $f\colon A^n \to B$ with inputs $x_1, \dots, x_n$, and we start by recalling some basic notions regarding complexity measures of these functions.

One says that a polynomial $p \in \mathbb R[x_1,\ldots,x_n]$ represents a function $f\colon A^n \to B$ if $p(x) = f(x)$ for every $x \in A^n$. Every function $f\colon A^n \to B$ is represented by a unique polynomial with degree at most $m - 1$ in each variable. Furthermore, this representation minimizes the degree of a polynomial representing $f$ (see, e.g., \cite[Proposition 2.2]{AGMK24}).

\begin{defi}[Degree]
The \emph{degree} of a function $f\colon A^n \to B$ is the degree of the unique polynomial representing $f$ with individual degree at most $m - 1$.
\end{defi}

The \emph{Hamming distance} of two points $x, y\in A^n$ is $|\{i\in[n]\, \colon\,  x_i\neq y_i\}|$. Note  that if we connect points in $A^n$ of Hamming distance $1$  with an edge we obtain $H(n,m)$ and the Hamming distance corresponds to the graph distance. The graph perspective may be comfortable in the following but we will not make it explicit throughout. 

\begin{defi}[Sensitivity]
The \emph{local sensitivity} $s_x(f)$ of a function $f\colon A^n\to B$ at a point $x\in A^n$ is the number of points $y\in A^n$ at Hamming distance 1 from $x$ such that $f(y)\neq f(x)$. 
The \emph{sensitivity} $s(f)$ of a function $f\colon A^n \to B$ is the maximum among the local sensitivities of $f$ at all points of $A^n$.
\end{defi}

The following theorem for $A=B=\{0,1\}$ is Huang's Sensitivity Theorem~\cite{H19}. Here we will show that the general case follows from Huang's result. This in particular confirms the $m$-ary Sensitivity Conjecture from~\cite{AGMK24}. 

\sensitivity*

\begin{proof}
Let $f\colon A^n \to B$ be a function of degree $d$, and let $m=|A|$. We identify $f$ with the polynomial witnessing its degree.

The first step is to reduce the range to $\{0, 1\}$. To this end, let $f_b\colon A^n \to \{0, 1\}$ be defined as $[f(x)=b]$, where here and in the next proof  
\[[i=j]=\begin{cases}
    1 & \text {if } i= j,\\
    0 & \text{otherwise}
\end{cases}\] denotes the \emph{Kronecker delta}. 
Since  $f = \sum_{b \in B} bf_b$, we see that some function $f_b$ has degree at least $d$. Moreover, a sensitive point of $f_b$ has at most the same sensitivity with respect to $f$, since $f_b(x) \neq f_b(y)$ implies $f(x) \neq f(y)$. Hence $s(f_b) \leq s(f)$ and it suffices to lower bound the sensitivity of $f_b$.

Let $D = \left\lceil\frac{d}{m-1}\right\rceil$. Since the degree of $f_b$ is at least $d$ and the individual degree is at most $m-1$, $f_b$~has a monomial involving at least $D$ coordinates. Suppose that one of them is $x_n$. Hence, we can write
\[
 f_b = \sum_{\mu=(\mu_1,\dots,\mu_{n-1})\in\mathbb{N}^{n-1}} x_1^{\mu_1}\cdots x_{n-1}^{\mu_{n-1}}\, P_\mu(x_n),
\]
where $P_\mu(x_n)$ is a function involving only $x_n$, and there is $\mu_0=(\mu_1,\dots,\mu_{n-1})\in\mathbb{N}^{n-1}$ such that $M=x_1^{\mu_1}\cdots x_{n-1}^{\mu_{n-1}}$ involves at least $D - 1$ variables and $P_{\mu_0}$ is not constant, say $P_{\mu_0}(a_s) \neq P_{\mu_0}(a_t)$. 

If we denote by $f_{b,a_i}\colon A^{n-1}\times\{a_i\}\rightarrow\{0,1\}$ the restriction of $f_b$ to the set of points whose last entry is equal to $a_i$, then we can write \[f_b=\sum_{i=1}^m f_{b,a_i}\cdot\left(\prod_{\substack{j=1\\ j\neq i}}^m \frac{x_n-a_j}{a_i-a_j}\right)\, ,\] where 
\begin{align*}
    f_{b,a_i}(x_1,\dots,x_{n-1})&= f_b(x_1,\dots,x_{n-1},a_i)=\sum_{\mu=(\mu_1,\dots,\mu_{n-1})\in\mathbb{N}^{n-1}}x_1^{\mu_1}\cdots x_{n-1}^{\mu_{n-1}}\, P_{\mu}(a_i) \\
    &= c_i\cdot M + \text{other terms in }x_1,\dots,x_{n-1}
\end{align*} for some constant $c_i$. As a consequence \[P_{\mu_0}(x_n)=\sum_{i=1}^mc_i\cdot\left(\prod_{\substack{j=1\\j\neq i}}^m\frac{x_n-a_j}{a_i-a_j}\right)\, ,\] with $c_s\neq c_t$ since $P_{\mu_0}(a_s)\neq P_{\mu_0}(a_t)$.

Restricting the last coordinate of the domain of $f_b$ to $\{a_s, a_t\}$ we get $g_b\colon A^{n-1}\times\{a_s,a_t\}\rightarrow \{0,1\}$, with \[g_b=f_{b,a_s}\cdot\left(\frac{x_n-a_t}{a_s-a_t}\right)+f_{b,a_t}\cdot\left(\frac{x_n-a_s}{a_t-a_s}\right)\, .\]

The coefficient of the monomial $M\cdot x_n$ in the previous expression is \[\frac{c_s}{a_s-a_t}+\frac{c_t}{a_t-a_s}=\frac{c_s-c_t}{a_s-a_t}\neq 0\, ,\] and hence $g_b$ still has a monomial which involves at least $D$ coordinates, which is exactly $M\cdot x_n$. 

Iterating this process, we can find a restriction of $f_b$, obtained by reducing the size of the domain of $D$ coordinates to $2$, whose degree is at least $D$.  Then, we arbitrarily fix the value of the remaining variables. The result is a restriction of the function to a Boolean function on $\{a_{i_1},b_{i_1}\} \times \cdots \times \{a_{i_D},b_{i_D}\}$ of degree $D$ whose sensitivity lower-bounds the sensitivity of the original function.
We can apply Huang's theorem~\cite{H19} to conclude that the restricted function has sensitivity at least $\sqrt{D}$. Hence the same holds for $f_b$ and for $f$, and we conclude that
\[
 s(f) \geq \sqrt{\frac{\deg(f)}{m-1}}\, . \qedhere
\]\end{proof}

In fact, we can show that the bound provided in Theorem \ref{thm.SensitivityConjecture} is almost tight.

\begin{prop}\label{prop:ub}
    There is a function $F\colon A^n\rightarrow B$ such that $s(F)\leq\sqrt{(m-1)\,\deg(F)}$.
\end{prop}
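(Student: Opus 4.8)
The plan is to show near-tightness of Theorem~\ref{thm.SensitivityConjecture} by \emph{exhibiting an explicit function} whose sensitivity and degree saturate the inequality $s(f)\geq\sqrt{\deg(f)/(m-1)}$ up to the claimed factor. The natural strategy is to start from the Boolean case: over $\{0,1\}^n$ there are functions (for instance, suitable AND/OR or address-type functions, or the classical examples behind $s(f)=\sqrt{\deg(f)}$ in the hypercube) for which $s=\sqrt{\deg}$, and then to \emph{amplify the degree} by working over the larger alphabet $A$ with $|A|=m$. Concretely, I would take a single variable's worth of structure that a binary function cannot see but an $m$-ary function can: because the unique representing polynomial allows individual degree up to $m-1$, one can install a degree-$(m-1)$ univariate gadget in each coordinate. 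Replacing each variable $x_i$ by a polynomial of individual degree $m-1$ that behaves like a $\{0,1\}$-indicator on the $m$ values of $A$ multiplies the degree by roughly $m-1$ while keeping the sensitivity essentially unchanged (a single-coordinate change still flips the gadget at most the same number of times).

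The key steps, in order, would be: (1) fix the extremal Boolean function $g\colon\{0,1\}^n\to\{0,1\}$ with $\deg(g)=n$ and $s(g)=\sqrt{n}$ coming from the tight hypercube construction; (2) define a univariate encoding $\phi\colon A\to\{0,1\}$ and observe that, via interpolation over $A$, $\phi$ is represented by a univariate polynomial of degree exactly $m-1$; (3) set $F(x_1,\dots,x_n)=g(\phi(x_1),\dots,\phi(x_n))$, and verify that the representing polynomial of $F$ has degree $(m-1)\deg(g)$, since composing with a degree-$(m-1)$ map multiplies the total degree of the surviving top monomial by $m-1$ (one must check the top-degree term does not cancel, exactly the cancellation-avoidance argument used in the proof of Theorem~\ref{thm.SensitivityConjecture}); (4) bound $s(F)$ by $s(g)$, because changing one coordinate $x_i$ changes at most one bit of the argument to $g$, so a sensitive point of $F$ yields a sensitive point of $g$, giving $s(F)\leq s(g)=\sqrt{\deg(g)}=\sqrt{\deg(F)/(m-1)}$, which is exactly $s(F)\leq\sqrt{(m-1)\deg(F)}/(m-1)\cdot(m-1)=\sqrt{(m-1)\deg(F)}$ after rearranging. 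I would double-check the arithmetic here: from $\deg(F)=(m-1)\deg(g)$ and $s(F)\leq\sqrt{\deg(g)}$ we get $s(F)\leq\sqrt{\deg(F)/(m-1)}\leq\sqrt{(m-1)\deg(F)}$, so the stated bound holds with room to spare, confirming near-tightness against the lower bound.

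The main obstacle I anticipate is \textbf{controlling the degree exactly rather than just as an upper bound}: I must guarantee that the composition $g\circ\phi^{\otimes n}$ does not lose degree through cancellation when reduced to the individual-degree-$\leq m-1$ normal form. This is the same delicate point that appears in the proof of Theorem~\ref{thm.SensitivityConjecture}, where one tracks the coefficient of a top monomial $M\cdot x_n$ and checks it is nonzero. Here I would isolate the top-degree monomial of $g$ (a product $\prod_{i\in S}y_i$ over some $S$ with $|S|=\deg(g)$) and argue that substituting the degree-$(m-1)$ polynomial for each $y_i$ produces the monomial $\prod_{i\in S}x_i^{m-1}$ with a nonzero coefficient that cannot be cancelled by any other term, since no other monomial of $g$ contributes to that maximal multidegree. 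A clean way to force this is to choose $\phi$ so that its representing polynomial is \emph{monic} of degree $m-1$ (e.g.\ $\phi(a)=\prod_{a'\in A, a'\neq a_0}(a-a')/(a_0-a')$ type choices, or simply an indicator scaled to be monic), making the leading behavior transparent. Once the leading term is pinned down, the sensitivity bound is routine, and the result follows.
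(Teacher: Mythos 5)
Your construction is essentially the paper's (the paper composes a tribes function with the Kronecker-delta gadget $[x_i=a]$, exactly your $g\circ\phi^{\otimes n}$), and your degree analysis in step (3) is sound and matches the paper's: since distinct monomials of $g$ have distinct supports, their images under the substitution $y_i\mapsto\phi(x_i)$ have distinct top multidegrees $\prod_{i\in S}x_i^{m-1}$, the composed polynomial already has individual degrees at most $m-1$, so it is the canonical representation and $\deg(F)=(m-1)\deg(g)$ with no cancellation. But step (4) contains a genuine error: the claim $s(F)\leq s(g)$ is false. Sensitivity over the alphabet $A$ counts \emph{all} $m-1$ neighbors in each coordinate, and with an indicator gadget $\phi=[\,\cdot=a\,]$ the point $(a,\dots,a)$ has the property that for every coordinate $i$ that is sensitive for $g$ at $\mathbf{1}$, \emph{every one} of the $m-1$ changes of $x_i$ flips $\phi(x_i)$ from $1$ to $0$ and hence changes $F$. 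So $s(F)\geq s_{(a,\dots,a)}(F)=(m-1)\,s_{\mathbf 1}(g)$ — this is precisely the computation in the paper's proof, and it is the whole point of the proposition. Your statement that "changing one coordinate $x_i$ changes at most one bit of the argument to $g$" correctly bounds the number of \emph{bits} of $\phi^{\otimes n}(x)$ that change per move, but conflates that with the number of sensitive \emph{neighbors}, of which there are $m-1$ per coordinate. Already for $g(y_1)=y_1$, $m=3$, the function $F(x_1)=[x_1=a]$ has $s(F)=2>1=s(g)$.

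Note that your claimed conclusion $s(F)\leq\sqrt{\deg(F)/(m-1)}$ would, combined with Theorem~\ref{thm.SensitivityConjecture}, show that the theorem's bound is \emph{exactly} tight — closing the open gap the paper explicitly leaves in Section~\ref{sec:last} — which should have been a warning sign. The repair is immediate and recovers the paper's argument: counting per coordinate, if $x_i=a$ then all $m-1$ changes flip the gadget bit, and if $x_i\neq a$ only the change to $a$ does, so $s_x(F)\leq(m-1)\,s_{\phi(x)}(g)\leq(m-1)\,s(g)$. With $s(g)=s$ and $\deg(g)=s^2$ this gives $s(F)\leq(m-1)s=\sqrt{(m-1)^2s^2}=\sqrt{(m-1)\deg(F)}$, and together with the lower bound at $(a,\dots,a)$ one gets equality, which is the actual content of the proposition (as stated, the inequality alone is satisfied trivially, e.g.\ by constant functions; the substance is a function of arbitrarily large degree achieving $s(F)=\sqrt{(m-1)\deg(F)}$).
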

\begin{proof}
    For the sake of the construction assume that $n\geq s^2$ for some integer $s$.
    In the Boolean case, there is a function $f$ with sensitivity $s$ and degree $s^2$ for every $s$, namely a tribes function with $s$ tribes of size $s$. More precisely, for a partition $[s^2]=P_1\cup\dots\cup P_s$ with $|P_i|=s$ for all $i\in[s]$, define $f(x)=\max_{i\in[s]}\min\{x_j\mid j \in P_i\}$. The degree is attained by $1-(\Pi_{i\in[s]}(1-\Pi_{j\in P_i}x_j))$ and the sensitivity by $x = (x_1,\ldots,x_n)$ with $x_j=1$ if $j \in P_1$ and $x_j=0$ otherwise. 

    Let us now fix an element $a\in A$, and define $F\colon A^n\rightarrow B$ as follows: for every $(x_1,\dots,x_n)\in A^n$, $F(x_1,\dots,x_n)$ is equal to the value that $f$ takes at the vector obtained from $(x_1,\dots,x_n)$ by replacing each input $x_i$ with the function $[x_i=a]$; i.e.,
    \[ F(x_1,\dots,x_n)=f([x_1=a],\dots,[x_n=a])\,.\]

    By construction $s(F) \leq (m-1) s(f)$. 
    Also, $\deg(F) = (m - 1)\, s^2$ since the function $[x_i=a]$ is represented by the polynomial $\prod_{\substack{j=1\\ j\neq i}}^m\frac{x_i-a_j}{a-a_j}$ and has degree $m-1$. Hence
\[
 s(F) \leq \sqrt{(m-1) \deg(F)}. \qedhere
\]\end{proof}

\section{Questions and conjectures}\label{sec:last}

Our work leaves two gaps that we would like to see closed:
\begin{itemize}
    \item Concerning large subgraphs of small maximum degree there remains an exponential gap. In Proposition~\ref{prop:lb} we show that an induced graph of $H(n,m)$ with a $(\frac{1}{m}+\epsilon)$-fraction of the vertices has maximum degree at least $\Omega_m(\epsilon) n$. On the other hand in Lemma~\ref{lem:ub} we construct such graphs with maximum degree at most $\frac{1}{\log_m(\frac{1}{\epsilon})} \cdot n$.
    \item Concerning sensitivity of $m$-ary functions there remains a small gap between the lower bound of $\sqrt{\frac{\deg(f)}{m-1}}$ from Theorem~\ref{thm.SensitivityConjecture} and the upper bound of $\sqrt{(m-1)\deg(f)}$ from Proposition~\ref{prop:ub}.
\end{itemize}

\section*{Acknowledgements}

Yuval Filmus was supported by ISF grant 507/24.
Sara Asensio, Ignacio García-Marco, and Kolja Knauer were supported by 
 PID2022-137283NB-C22 funded by MICIU/AEI/10.13039/501100011033 and by ERDF/EU. Sara Asensio was also supported by European Social Fund, \textit{Programa Operativo de Castilla y León}, and \textit{Consejería de Educación de la Junta de Castilla y León}. Kolja Knauer was also supported by the Spanish State Research Agency, through the Severo Ochoa and María de Maeztu Program for Centers and Units of Excellence in R\&D (CEX2020-001084-M). Part of this work was done during a research visit of Sara Asensio to Universidad de La Laguna funded by the MICIU grant RED2022-134947-T.

The authors thank the anonymous referees for their feedback and suggestions that have helped to correct several inaccuracies and have improved the manuscript's quality.

\bibliographystyle{plain}
\bibliography{biblio}
    
\end{document}